\theoremstyle{thmstyleone}%
\newtheorem{theorem}{Theorem}
\theoremstyle{thmstyletwo}%
\newtheorem{remark}{Remark}%
\newtheorem{lemma}{Lemma}%
\theoremstyle{thmstylethree}%
\newcommand{\argmax}{\raisebox{-1.8mm}{${{\displaystyle \rm arg\,\! max}
                     \atop {\scriptstyle 1 \le i \le n}}$}}
\begin{document}

\title[Article Title]{On the supremum and its location of the standardized uniform empirical process}


\author{\fnm{Dietmar} \sur{Ferger}}\email{dietmar.ferger@tu-dresden.de}



\affil{\orgdiv{Fakult\"{a}t Mathematik}, \orgname{Technische Universit\"{a}t Dresden}, \orgaddress{\street{Zellescher Weg 12-14}, \city{Dresden}, \postcode{01069}, \country{Germany}}}




\abstract{We show that the maximizing point and the supremum of the standardized uniform empirical process
converge in distribution. Here, the limit variable $(Z,Y)$ has independent components. Moreover, $Z$ attains the values zero and one with equal probability one half and $Y$ follows the Gumbel-distribution.
}

\keywords{weighted uniform empirical process, maximizer, maximal inequality}



\maketitle

\section{Introduction and Main Result}
Let $X_1,\ldots,X_n$ be $n \in \mathbb{N}$ independent real random variables uniformly distributed on the interval $(0,1)$.
If $F_n$ denotes the corresponding empirical distribution function, then
$$
 V_n:=\sqrt{n} \sup_{0<t<1}\frac{|F_n(t)-t|}{\sqrt{t(1-t)}}
$$
is the supremum of the standardized uniform empirical process
$$
 Q_n(t)=\frac{|\sqrt{n}\{F_n(t)-t\}|}{\sqrt{t(1-t)}}, \; 0<t<1.
$$
Chibisov \cite{Chibisov} proves that $V_n$ converges to infinity in probability.
Whereas Jaeschke \cite{Jaeschke} shows that a certain affine-linear transformation of $V_n$ converges in distribution. More precisely,
let
\begin{equation} \label{anbn}
a_n=\sqrt{2 \log \log n} \quad \text{ and } \quad b_n=2 \log \log n+\frac{1}{2}\log \log \log n -\frac{1}{2} \log \pi.
\end{equation}
Then
\begin{equation} \label{limitJaeschke}
 a_n V_n - b_n \stackrel{\mathcal{D}}{\rightarrow} Y,
\end{equation}
where $Y$ follows the Gumbel-distribution, i.e. $\mathbb{P}(Y \le x)= \exp\{-2 e^{-x}\}, x \in \mathbb{R}.$ \\

Elementary arguments show that
\begin{equation} \label{sup}
 V_n = \sqrt{n} \max_{1 \le i \le n} \max \big\{\frac{\frac{i}{n}-X_{i:n}}{\sqrt{X_{i:n}(1-X_{i:n})}},\frac{X_{i:n}-\frac{i-1}{n}}{\sqrt{X_{i:n}(1-X_{i:n})}}\big\},
\end{equation}
where $X_{i:n}$ denotes the $i$-th order-statistic, $1 \le i \le n$.\\

In this paper we also consider the random variable $\tau_n \in (0,1)$, which maximizes $Q_n$ in the following sense:
\begin{equation} \label{deftaun}
 \tau_n = \min \{ t \in (0,1): \max\{Q_n(t),Q_n(t-)\}= V_n\}.
\end{equation}
The definition takes into account that $Q_n$ is right-continuous with left limits on its domain $(0,1)$. It turns out that the location of the supremum is given by $$\tau_n=X_{r:n},$$
where
\begin{equation} \label{indexr}
 r = \argmax \max \big\{\frac{\frac{i}{n}-X_{i:n}}{\sqrt{X_{i:n}(1-X_{i:n})}},\frac{X_{i:n}-\frac{i-1}{n}}{\sqrt{X_{i:n}(1-X_{i:n})}}\big\}.
\end{equation}

\vspace{0.4cm}
Moreover, $\tau_n$ is almost surely (a.s.) unique. Let $X_i^\prime :=1-X_i, 1 \le i \le n.$ Then $(X_1^\prime,\ldots,X_n^\prime) \stackrel{\mathcal{D}}{=} (X_1,\ldots,X_n)$ and $X_{i:n}^\prime = 1-X_{n-i+1:n}, 1 \le i \le n.$ Thus it follows from (\ref{sup}) and (\ref{indexr}) that
\begin{equation} \label{symmetric}
 (\tau_n,V_n) \stackrel{\mathcal{D}}{=} (1 - \tau_n,V_n).
\end{equation}

\vspace{0.4cm}
Our main result says that as $n$ tends to infinity the whole probability mass of $\tau_n$ is shifted equally into the boundary points zero and one of the
open unit interval. Moreover, $\tau_n$ and $V_n$ are asymptotically independent.\\

\begin{theorem} \label{Thm} For all $x \in \mathbb{R} \setminus \{0\}$ and $y \in \mathbb{R}$ the following limit-theorem holds:
\begin{equation} \label{limthm}
 \lim_{n \rightarrow \infty} \mathbb{P}(\tau_n \le x, a_n V_n -b_n \le y) = \mathbb{P}(Z \le x) \exp\{-2 e^{-y}\}= \mathbb{P}(Z \le x) \mathbb{P}(Y \le y),
\end{equation}
where $\mathbb{P}(Z=1)=\mathbb{P}(Z=0)=\frac{1}{2}.$
In particularly,  $$(\tau_n,a_n V_n-b_n) \stackrel{\mathcal{D}}{\rightarrow} (Z,Y)$$
with $Z$ and $Y$ independent.
\end{theorem}

\vspace{0.3cm}
Observe that there is a strong relationship between $\tau_n$ and $V_n$, which according to (\ref{deftaun}) is given by
$$
  \max\{Q_n(\tau_n),Q_n(\tau_n-)\} = V_n.
$$
Considering this fact, the asymptotic independence of $\tau_n$ and $V_n$ is quite surprising.\\

Indeed, this independence does not apply in the non-weighted case. More precisely consider
$$
W_n:= \sqrt{n} \sup_{0 \le t \le 1} |F_n(t)-t|= \sup_{0 \le t \le 1}|u_n(t)|,
$$
where $u_n(t) = \sqrt{n}(F_n(t)-t)$ denotes the uniform empirical process. Let $\sigma_n$ be the corresponding maximizing point, i.e.
$$
 \sigma_n:=\min \{t \in [0,1]: \max\{|u_n(t)|,|u_n(t-)|\}= W_n\}.
$$

Let $M$ be the sup-functional and $a$ the argmax-functional as defined in Lemma A.2 in Ferger \cite{Ferger1}. Then $(\sigma_n,W_n)= L(u_n)$, where $L:=(a,M)$ is a functional on the Skorokhod-space $D[0,1]$ with values in $\mathbb{R}^2$, which by Lemmas A.3 and A.4 in Ferger \cite{Ferger1} is measurable and continuous on $C[0,1]$. By Donsker's Theorem $u_n \stackrel{\mathcal{D}}{\rightarrow} B$ in $D[0,1]$, where $B$ is a Brownian bridge. Therefore, the Continuous Mapping Theorem (CMT) yields that
$$
 (\sigma_n,W_n) \stackrel{\mathcal{D}}{\rightarrow} (T,W),
$$
where $T=a(|B|)$ is the maximizing point of $|B|$, which can be shown to be a.s. unique. Moreover, $W=M(|B|)=\sup_{0 \le t \le 1}|B(t)|$ is the infinity-norm of the Brownian bridge.
According to Theorem 2.6 in  Ferger  \cite{Ferger0} the limit variable $(T,W)$ has density
\begin{equation} \label{density}
 f_{(T,W)}(x,y) = \sqrt{\frac{8}{\pi}} \psi(x,y) \psi(1-x,y),\quad x \in (0,1), y \ge 0,
\end{equation}
where
$$
 \psi(x,y) = y x^{-\frac{3}{2}} \sum_{j \ge 0} (-1)^j (2j+1) e^{-\frac{1}{2} (2j+1)^2 x^{-1}y^2}
$$

In view of (\ref{density}) the components $T$ and $W$ are far away from being independent.\\

\begin{remark}
Notice that the random index $r$ of the maximizing order-statistic $X_{r:n}$ depends on $n$. Since $\frac{r}{n} = F_n^{-1}(X_{r:n})$ and the uniform empirical quantile function $F_n^{-1}$ converges uniformly on $(0,1)$ to the identity with probability one, we obtain from Theorem \ref{Thm} and Slutsky's theorem that
$$
 \frac{r}{n} \stackrel{\mathcal{D}}{\rightarrow} Z.
$$
\end{remark}

\section{The proof}
Our proof of Theorem \ref{Thm} relies on a maximal-inequality for the process $Q_n$.\\

\begin{lemma} \label{maxinequalityQn} For every $a \in (0,\frac{1}{2}]$ and each $\lambda>0$ it follows that
\begin{equation} \label{maxinequality}
\mathbb{P}\big(\sup_{a \le t \le 1-a} \frac{|F_n(t)-t|}{\sqrt{t(1-t)}} > \lambda \big) \le 2 \; \lambda^{-2} \; n^{-1} \big(\log(\frac{1-a}{a})+1\big).
\end{equation}
\end{lemma}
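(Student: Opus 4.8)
The plan is to uncover a martingale hidden inside the weighted empirical process and then to apply to it a weighted maximal inequality of Birnbaum--Marshall (equivalently, continuous-parameter H\'ajek--R\'enyi) type.

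First I would set $N_t:=\dfrac{F_n(t)-t}{1-t}$, $t\in[0,1)$, and verify that $(N_t)$ is a martingale with respect to the natural filtration $\mathcal F_t=\sigma\big(\mathbf 1\{X_i\le s\}:\ s\le t,\ 1\le i\le n\big)$. The point is that, given $\mathcal F_s$, each $X_i$ with $X_i>s$ is conditionally uniform on $(s,1)$ and these are independent; a one-line computation then yields $\mathbb E[F_n(t)-t\mid\mathcal F_s]=\tfrac{1-t}{1-s}(F_n(s)-s)$ for $s\le t$, hence $\mathbb E[N_t\mid\mathcal F_s]=N_s$. On $[a,1-a]$ one has $|N_t|\le 1/a$, so $N$ is a bounded, in particular square-integrable, martingale; since $\mathbb E[N_t]=0$,
\[ \mathbb E[N_t^2]=\frac{\operatorname{Var}(F_n(t))}{(1-t)^2}=\frac{t}{n(1-t)},\qquad\text{so}\qquad d\,\mathbb E[N_t^2]=\frac{dt}{n(1-t)^2}, \]
and in particular $t\mapsto\mathbb E[N_t^2]$ is absolutely continuous.

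The second, purely algebraic, step is the identity $\dfrac{|F_n(t)-t|}{\sqrt{t(1-t)}}=c(t)\,|N_t|$ with $c(t):=\sqrt{\tfrac{1-t}{t}}$, which is non-increasing on $(0,1)$. Since $|N_t|$ is a non-negative submartingale (Jensen), I would apply to it the Birnbaum--Marshall inequality with the non-increasing weight $c$ on $[a,1-a]$:
\[ \mathbb P\Big(\sup_{a\le t\le 1-a}c(t)|N_t|>\lambda\Big)\le\frac{1}{\lambda^2}\Big(c(a)^2\,\mathbb E[N_a^2]+\int_a^{1-a}c(t)^2\,d\,\mathbb E[N_t^2]\Big). \]
If one prefers a self-contained argument, this follows from the classical discrete H\'ajek--R\'enyi inequality applied along a partition of $[a,1-a]$ that contains all the order statistics: between consecutive order statistics $N_t=(k/n-t)/(1-t)$ is continuous, so refining the partition makes the discrete maxima increase to $\sup_{a\le t\le1-a}c(t)|N_t|$, while the discrete inequality holds verbatim along every partition.

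It remains to evaluate the right-hand side. The boundary term is $c(a)^2\,\mathbb E[N_a^2]=\dfrac{1-a}{a}\cdot\dfrac{a}{n(1-a)}=\dfrac1n$ --- this is the ``$+1$''. The integral term is $\displaystyle\int_a^{1-a}\frac{1-t}{t}\cdot\frac{dt}{n(1-t)^2}=\frac1n\int_a^{1-a}\frac{dt}{t(1-t)}=\frac1n\Big[\log\frac{t}{1-t}\Big]_a^{1-a}=\frac2n\log\frac{1-a}{a}.$ Hence the probability is at most $\lambda^{-2}n^{-1}\big(1+2\log\tfrac{1-a}{a}\big)\le 2\lambda^{-2}n^{-1}\big(\log\tfrac{1-a}{a}+1\big)$, which is the claim; alternatively one gets the factor $2$ directly by splitting $[a,1-a]$ at $\tfrac12$, where $\int_a^{1/2}\frac{dt}{t(1-t)}=\log\tfrac{1-a}{a}$, and treating $[\tfrac12,1-a]$ via the symmetry $X_i\mapsto 1-X_i$. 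The main obstacle is the careful verification of the martingale property together with the use of a maximal inequality carrying the sharp constant $1$: Doob's $L^2$ inequality would produce the same structure but with an extra factor $4$, which breaks the stated bound.
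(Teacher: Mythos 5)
Your proof is correct and follows essentially the same route as the paper: the martingale $N_t=(F_n(t)-t)/(1-t)$ combined with a weighted H\'ajek--R\'enyi/Birnbaum--Marshall maximal inequality with weight $(1-t)/t$, which is exactly the content of Lemma 3.3 of Ferger (2017) that the paper invokes. The only cosmetic difference is that you integrate directly over $[a,1-a]$ (yielding the marginally sharper bound $\lambda^{-2}n^{-1}(2\log\tfrac{1-a}{a}+1)$), whereas the paper works on $[a,\tfrac12]$ and doubles by symmetry.
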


\begin{proof} It is well-known that the process $\frac{F_n(t)-t}{1-t}, t \in [0,1),$ is a centered martingale with respect to the natural filtration, confer Gaenssler and Stute \cite{Gaenssler}, p.4. Therefore, $S(t):= (\frac{F_n(t)-t}{1-t})^2, t \in [0,1),$ is a non-negative sub-martingale.
From Lemma 3.3 in Ferger \cite{Ferger1} we can infer with $w(t)=\frac{1-t}{t}$ that

\begin{eqnarray}
\mathbb{P}\big(\sup_{a \le t \le 1/2} \frac{|F_n(t)-t|}{\sqrt{t(1-t)}} > \lambda \big)
&\le&\mathbb{P}\big(\sup_{a \le t \le 1/2} w(t) S(t) > \lambda^2 \big) \nonumber\\
&\le& \lambda^{-2}\{\int_a^{\frac{1}{2}} H(t) (-w)(dt)+w(\frac{1}{2}) H(\frac{1}{2})\}. \label{left}
\end{eqnarray}
Here, $H(t)=\mathbb{E}[S(t)]= (1-t)^{-2} \text{Var}[F_n(t)]= n^{-1} \frac{t}{1-t}.$ Since $-w^\prime(t)=t^{-2}$, the integral in (\ref{left}) simplifies to
$n^{-1} \log(\frac{1-a}{a})$ and $w(\frac{1}{2}) H(\frac{1}{2})=n^{-1}$. Thus we obtain from (\ref{left}) that
$$
 \mathbb{P}\big(\sup_{a \le t \le 1/2} \frac{|F_n(t)-t|}{\sqrt{t(1-t)}} > \lambda \big) \le \lambda^{-2} \; n^{-1} (\log(\frac{1-a}{a})+1).
$$
Now, the assertion follows, because $$\sup_{a \le t \le 1/2} \frac{|F_n(t)-t|}{\sqrt{t(1-t)}} \stackrel{\mathcal{D}}{=} \sup_{1/2 \le t \le 1-a} \frac{|F_n(t)-t|}{\sqrt{t(1-t)}}.$$
\end{proof}

For the proof of Theorem \ref{Thm} let $(\alpha_n)$ be a positive sequence converging to zero. Put $I:=(0,1)$ and $I_n:=[\alpha_n,1-\alpha_n]$. By assumption there exists an integer $n_0$ such that $\emptyset \neq I_n \subseteq I$ for all $n \ge n_0$. If
$$
 Y_n := \sup_{t \in I_n} Q_n(t) \quad \text{ and } \quad Z_n:=\sup_{t \in I \setminus I_n} Q_n(t),
$$
then $V_n= \sup_{t \in I} Q_n(t) = \max \{Y_n,Z_n\}.$\\

With the above maximal-inequality it follows that
$$
\mathbb{P}(a_n^{-1}|Y_n|>\epsilon)\le \epsilon^{-2} \frac{\log(\alpha_n^{-1})+1}{\log\log n} \quad \text{for all } \epsilon>0.
$$
Consequently, if for example $\alpha_n= (\log \log n)^{-1}$, then
\begin{equation} \label{zero}
 a_n^{-1} Y_n \stackrel{\mathbb{P}}{\rightarrow} 0.
\end{equation}
Put $\tilde{Y}_n:=a_n V_n-b_n$. Then $a_n^{-1} V_n= a_n^{-2}\tilde{Y}_n+ a_n^{-2} b_n$, and thus by (\ref{anbn}) and (\ref{limitJaeschke})
\begin{equation} \label{one}
 a_n^{-1} V_n \stackrel{\mathbb{P}}{\rightarrow} 1.
\end{equation}
Next, notice that
\begin{equation} \label{tauninIn}
\{\tau_n \in (\alpha_n,1-\alpha_n]\} \subseteq \{Z_n \le Y_n\}.
\end{equation}
To see this assume that $\tau_n \in (\alpha_n,1-\alpha_n]$, but $Z_n>Y_n$. Recall that by definition (\ref{deftaun}) the maximizer satisfies $\max\{Q_n(\tau_n),Q_n(\tau_n-)\}=V_n.$ We distinguish two cases. If $Q(\tau_n)=V_n$, then $V_n \le \sup_{\alpha_n< t \le 1-\alpha_n} Q_n(t) \le Y_n <Z_n \le V_n$, a contradiction. If $Q_n(\tau_n-)=V_n$, then $V_n = \lim_{t \uparrow \tau_n, t \in (\alpha_n,1-\alpha_n]} Q_n(t) \le \sup_{\alpha_n< t \le 1-\alpha_n} Q_n(t) \le Y_n <Z_n \le V_n$, once again a contradiction. This shows (\ref{tauninIn}) and therefore
\begin{eqnarray}
0&\le& \mathbb{P}(\tau_n \in (\alpha_n,1-\alpha_n]) \le \mathbb{P}(Z_n \le Y_n) \le \mathbb{P}(V_n=Y_n) \nonumber\\
 &\le& \mathbb{P}(V_n=Y_n, a_n^{-1} Y_n \le \epsilon)+\mathbb{P}(a_n^{-1} Y_n > \epsilon) \quad \forall \; \epsilon>0. \label{decomp}
\end{eqnarray}
The second probability in (\ref{decomp}) converges to zero for all positive $\epsilon$ by (\ref{zero}). The first probability in (\ref{decomp})
is less than or equal to
$$
 \mathbb{P}(V_n \le a_n \epsilon) \le \mathbb{P}(V_n \le a_n \epsilon, |a_n^{-1} V_n-1|\le \epsilon)+\mathbb{P}(|a_n^{-1} V_n-1| > \epsilon).
$$
Here, the second probability converges to zero by (\ref{one}) and the first probability actually vanishes whenever $\epsilon \in (0,1/2)$, because in this case the inequalities $V_n \le a_n \epsilon$ and $V_n \ge (1-\epsilon)a_n$ are mutually exclusive.
In summary, we have shown that
\begin{equation} \label{probtauninIn}
 \mathbb{P}(\tau_n \in (\alpha_n,1-\alpha_n]) \rightarrow 0, \; n \rightarrow \infty.
\end{equation}

Next, observe that
$$
 \mathbb{P}(\tilde{Y}_n \le y)=\mathbb{P}(\tau_n \le \alpha_n, \tilde{Y}_n \le y)+\mathbb{P}(\tau_n \in (\alpha_n,1-\alpha_n),\tilde{Y}_n \le y)+\mathbb{P}(\tau_n \ge 1-\alpha_n, \tilde{Y}_n \le y).
$$
Here, the second probability on the right side converges to zero as a consequence of (\ref{probtauninIn}). Moreover, the third probability is
equal to the first probability by symmetry (\ref{symmetric}). Thus
\begin{equation} \label{step1}
 \mathbb{P}(\tau_n \le \alpha_n, \tilde{Y}_n \le y)= \frac{1}{2} \mathbb{P}(\tilde{Y}_n \le y)+ \beta_n,
\end{equation}
where $\beta_n \rightarrow 0$. Finally, let $x \in (0,1)$. Then there exists some integer $n_1=n_1(x)$ such that $\alpha_n <x \le 1-\alpha_n$ for all $n \ge n_1$.
It follows that
\begin{equation} \label{step2}
 \mathbb{P}(\tau_n \le x,\tilde{Y}_n \le y)=\mathbb{P}(\tau_n \le \alpha_n,\tilde{Y}_n \le y)+\mathbb{P}(\tau_n \in (\alpha_n,x],\tilde{Y}_n \le y).
\end{equation}
Since the second probability on the right side in (\ref{step2}) is less that or equal to $\mathbb{P}(\tau_n \in (\alpha_n,1-\alpha_n])$ for all $n \ge n_1$, it converges to zero by (\ref{probtauninIn}). Therefore, it follows from (\ref{step2}), (\ref{step1}) and (\ref{limitJaeschke}) that
$$
 \mathbb{P}(\tau_n \le x,\tilde{Y}_n \le y) \rightarrow \frac{1}{2} \exp\{-2 e^{-y}\} \quad \forall \; x \in (0,1) \; \forall \; y \in \mathbb{R}.
$$
For $x<0$ or $x \ge 1$ the validity of (\ref{limthm}) is trivial, because $\tau_n \in (0,1)$ a.s. and (\ref{limitJaeschke}) holds. As to the second part of Theorem \ref{Thm} notice that in particularly we have
convergence at every continuity point of $Z$ so that we can apply a well-known characterisation for distributional convergence of random variables in the euclidian space $\mathbb{R}^d$, confer for instance Proposition 5.58 in Witting and M\"{u}ller-Funk \cite{Witting}.





\end{document}